\tikzstyle{vertex}=[circle, draw, inner sep=0pt, minimum size=6pt]
\definecolor{verylight}{gray}{0.97}
\definecolor{light}{gray}{0.9}
\definecolor{medium}{gray}{0.85}
\definecolor{dark}{gray}{0.6}
\def\frk{\frak}               
\def\Phi{{\frk n}}
\def\Phi{{\frk N}}
\def\opn#1#2{\def#1{\operatorname{#2}}} 
\opn\chara{char} \opn\length{\ell} \opn\pd{pd} \opn\rk{rk}
\opn\projdim{proj\,dim} \opn\injdim{inj\,dim} \opn\rank{rank}
\opn\depth{depth} \opn\grade{grade} \opn\height{height}
\opn\embdim{emb\,dim} \opn\codim{codim}
\opn\Tr{Tr} \opn\bigrank{big\,rank}
\opn\superheight{superheight}\opn\lcm{lcm}
\opn\trdeg{tr\,deg}
	\opn\reg{reg} \opn\lreg{lreg} \opn\ini{in} \opn\lpd{lpd}
	\opn\size{size}\opn\bigsize{bigsize}
	\opn\cosize{cosize}\opn\bigcosize{bigcosize}
	\opn\sdepth{sdepth}\opn\sreg{sreg}
	\opn\link{link}\opn\fdepth{fdepth}
	\opn\deg{deg}
	\opn\max{max}
	\opn\indeg{indeg}
	\opn\min{min}
	\opn\psln{psln}
	\opn\div{div} \opn\Div{Div} \opn\cl{cl} \opn\Cl{Cl}
	\let\epsilon\varepsilon
	\let\phi=\varphi
	\let\kappa=\varkappa
	\opn\Spec{Spec} \opn\Supp{Supp} \opn\supp{supp} \opn\Sing{Sing}
	\opn\Ass{Ass} \opn\Min{Min}\opn\Mon{Mon} \opn\dstab{dstab} \opn\astab{astab}
	\opn\Syz{Syz}
	\opn\Ann{Ann} \opn\Rad{Rad} \opn\Soc{Soc}
	\opn\Im{Im}
	\opn\Ind{Ind}
	\opn\del{del}
	\opn\Ker{Ker} \opn\Coker{Coker} \opn\Am{Am}
	\opn\Hom{Hom} \opn\Tor{Tor} \opn\Ext{Ext} \opn\End{End}
	\opn\Aut{Aut} \opn\id{id}
	\opn\nat{nat}
	\opn\pff{pf}
	\opn\Pf{Pf} \opn\GL{GL} \opn\SL{SL} \opn\mod{mod} \opn\ord{ord}
	\opn\Gin{Gin} \opn\Hilb{Hilb}\opn\sort{sort}
	\opn\initial{init}
	\opn\ende{end}
	\opn\height{height}
	\opn\bight{bight}
	\opn\hte{ht}
	\opn\indeg{indeg}
	\opn\reg{reg}
	\opn\depth{depth}
	\opn\type{type}
	\opn\ldim{ldim}
	\opn\maxdeg{maxdeg}
	\opn\aff{aff} \opn\con{conv} \opn\relint{relint} \opn\st{st}
	\opn\lk{lk} \opn\cn{cn} \opn\core{core} \opn\vol{vol}
	\opn\link{link} \opn\star{star}\opn\lex{lex}
	\opn\gr{gr}
	\def\pot#1#2{#1[\kern-0.28ex[#2]\kern-0.28ex]}
	\opn\dirlim{\underrightarrow{\lim}}
	\opn\inivlim{\underleftarrow{\lim}}
		\let\to=\rightarrow
		\def\Implies{\ifmmode\Longrightarrow \else
			\unskip${}\Longrightarrow{}$\ignorespaces\fi}
		\def\implies{\ifmmode\Rightarrow \else
			\unskip${}\Rightarrow{}$\ignorespaces\fi}
		\def\iff{\ifmmode\Longleftrightarrow \else
			\unskip${}\Longleftrightarrow{}$\ignorespaces\fi}
		\theoremstyle{plain}
		\newtheorem{Theorem}{Theorem}[section]
		\newtheorem{Lemma}[Theorem]{Lemma}
		\newtheorem{Corollary}[Theorem]{Corollary}
		\newtheorem{Proposition}[Theorem]{Proposition}
		\newtheorem{Conjecture}[Theorem]{Conjecture}
		\newtheorem{Question}[Theorem]{Question}
		\theoremstyle{definition}
		\newtheorem{Definition}[Theorem]{Definition}
		\newtheorem{Example}[Theorem]{Example}
		\let\epsilon\varepsilon
		\let\kappa=\varkappa
		\def\qed{\ifhmode\textqed\fi
			\ifmmode\ifinner\quad\qedsymbol\else\dispqed\fi\fi}
		\def\textqed{\unskip\nobreak\penalty50
			\hskip2em\hbox{}\nobreak\hfil\qedsymbol
			\parfillskip=0pt \finalhyphendemerits=0}
		\def\dispqed{\rlap{\qquad\qedsymbol}}
		\opn\dis{dis}
		\def\pnt{{\raise0.5mm\hbox{\large\bf.}}}
		\opn\Lex{Lex}
\begin{document}
 \title{An upper bound on stability of powers of matroidal ideals}

 \author {Mozhgan Koolani, Amir Mafi* and Parasto Soufivand}

\address{Mozghan Kolani, Department of Mathematics, University of Kurdistan, P.O. Box: 416, Sanandaj,
Iran.}
\email{mozhgan.koolani@gmail.com}

\address{Amir Mafi, Department of Mathematics, University of Kurdistan, P.O. Box: 416, Sanandaj,
Iran.}
\email{a\_mafi@ipm.ir}

\address{Parasto Soufivand, Department of Mathematics, University of Kurdistan, P.O. Box: 416, Sanandaj,
Iran.}
\email{Parisoufivand@gmail.com}

\subjclass[2010]{13A15, 13A30, 13C15, 13F55.}

\keywords{Monomial ideals, polymatroidal ideals, associated primes and depth stability number.\\
* Corresponding author}

\begin{abstract}
Let $R=K[x_1,\ldots,x_n]$ be a polynomial ring in $n$ variables over a field $K$ and $I$ be a matroidal ideal of degree $d$.
Let $\astab(I)$ and $\dstab(I)$ be the smallest integers $l$ and $k$, for which $\Ass(I^l)$ and $\depth(R/I^k)$ stabilize, respectively.
In this paper, we show that $\astab(I),\dstab(I)\leq\min\{d,\ell(I)\}$, where $\ell(I)$ is the analytic spread of $I$. Furthermore, by a counterexample we give a negative answer to the conjecture of Herzog and Qureshi \cite{HQ} about stability of matroidal ideals.
\end{abstract}

\maketitle

\section*{Introduction}
Throughout this paper, we assume that $R=K[x_1,\ldots,x_n]$ is the polynomial ring in $n$ variables over a field $K$ with the maximal ideal $\frak{m}=(x_1,\ldots,x_n)$ and $I$ is a monomial ideal of $R$ such that $G(I)$ is the unique minimal monomial generators set of $I$.
Let $\Ass(I)$ be the set of associated prime ideals of $R/I$. Brodmann \cite{B1} showed that there exists an integer $l$ such that $\Ass(I^t)=\Ass(I^{l})$ for all $t\geq l$. The smallest such integer $l$ is
called the {\it index of Ass-stability} of $I$,  and denoted by $\astab(I)$. Moreover, $\Ass(I^{l})$ is called the stable set of associated
prime ideals of $I$ and it is denoted by $\Ass^{\infty}(I)$. Brodmann \cite{B} also showed that there exists an integer $k$ such that $\depth R/I^s=\depth R/I^{k}$ for all $s\geq k$. The smallest such integer $k$ is called the {\it index of depth stability} of $I$ and denoted by $\dstab(I)$.
Herzog and the second author \cite{HM} proved that if $n=3$ then, for any graded ideal $I$ of $R$, $\astab(I)=\dstab(I)$. Also, they showed that for $n=4$ the indices $\astab(I)$ and $\dstab(I)$, in general, are unrelated.

A monomial ideal $I$ generated in a single degree is {\it polymatroidal ideal} when it is satisfying in the following exchange property: for all monomials $u,v\in G(I)$ with $\deg_{x_i}(u)>\deg_{x_i}(v)$, there exists an index $j$ such that $\deg_{x_j}(v)>\deg_{x_j}(u)$ and $x_j(u/x_i)\in G(I)$ (see \cite{HH} or \cite{HH2}). A square-free polymatroidal ideal is called a {\it matroidal ideal}.

The polymatroidal ideals have interesting properties, we recall some of useful properties which is needed in this paper:
(1) The product of polymatroidal ideals is again polymatroidal (see \cite[Theorem 5.3]{CH}), in particular each power of a polymatroidal ideal is polymatroidal; (2) $I$ is a polymatroidal ideal if and only if $(I:u)$ is a polymatroidal ideal for all monomials $u$ (see \cite[Theorem 1.1]{BH}); (3) if $I$ is a matroidal ideal of degree $d$, then $\depth(R/I)=d-1$ and $\pd(R/I)=n-d+1$ (see \cite[Theorem 2.5]{C} or \cite[Theorem 2.3]{SM}); (4) according to \cite{HQ} and \cite{HRV}, every polymatroidal ideal satisfying the persistence property and non-increasing depth functions, that is, if $I$ is a polymatroidal ideal then, for all $l,k$, there is the following sequences: $\Ass(I^l)\subseteq\Ass(I^{l+1})$ and $\depth(R/I^{k+1})\leq\depth(R/I^k)$, these properties, in general, for all square-free monomial ideals has a negative answer (see \cite[Example 2.9]{MS}); (5) for every polymatroidal ideal $I$, we have $\astab(I),\dstab(I)<\ell(I)$, where $\ell(I)$ is the analytic spread of $I$  (see \cite[Theorem 4.1]{HQ});
(6) if $I$ is a polymatroidal ideal, then $\lim_{k\to\infty}\depth R/I^k= n-\ell(I)$ (see \cite[Corollary 3.5]{HRV}); (7) if $I$ is a matroidal ideal of degree $d$, then $\astab(I)=1$ if and only if $\dstab(I)=1$ (see \cite[Theorem 3.3]{MN}).

Herzog and Qureshi \cite{HQ} raised the following conjectured:
\begin{Conjecture}\label{Co}
Let $I$ be any polymatroidal ideal. Does $\astab(I)=\dstab(I)$?
\end{Conjecture}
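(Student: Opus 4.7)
The conjecture asks whether $\astab(I) = \dstab(I)$ for every polymatroidal ideal $I$. A natural first approach is to attempt a proof by induction on the common value. The base case is property~(7) above: $\astab(I) = 1$ if and only if $\dstab(I) = 1$. To extend this, one would hope to show that $\Ass(I^k) = \Ass(I^{k+1})$ if and only if $\depth(R/I^k) = \depth(R/I^{k+1})$. Both directions could in principle be attacked through the colon ideals $(I:u)$, which remain polymatroidal by~(2), together with the fact that every power of $I$ is polymatroidal by~(1), so that the inductive hypothesis applies to the new polymatroidal ideals arising at each step.

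The technical difficulty is that depth and the set of associated primes record complementary pieces of homological data: depth picks out the smallest $i$ with $\Ext^i_R(R/I^k, R) \neq 0$, whereas $\Ass(R/I^k)$ is a whole set of primes, and the two stabilization thresholds need not coincide just because each is monotone. Any inductive argument would therefore have to use the exchange property in an essential way, and the obvious Taylor-complex or Lyubeznik-resolution approaches do not distinguish polymatroidal ideals sharply enough. This already suggests that the conjecture may fail, which aligns with the counterexample announced in the abstract.

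Accordingly, the practical plan is to search for a counterexample by computer. Using the new bound $\astab(I), \dstab(I) \leq \min\{d, \ell(I)\}$ together with property~(7), the smallest interesting case has both indices in $\{2, 3\}$ but unequal, so I would focus on matroidal ideals of degree $d \geq 3$ on a modest number of variables, say $n \leq 10$. I would enumerate matroids up to isomorphism from standard databases, form the matroidal ideal generated by the basis monomials, and in a system like Macaulay2 first compute $\depth(R/I^k)$ for $k = 1, \dots, \min\{d, \ell(I)\}$; for candidates where depth stabilizes late, I would then compute $\Ass(R/I^k)$ over the same range and look for a mismatch. The main obstacle is computational cost: primary decomposition of $I^k$ grows rapidly in both $k$ and $n$, so aggressive pruning via symmetry of the matroid and via the known limit $\depth(R/I^k) \to n - \ell(I)$ from~(6) is necessary for the search to terminate in reasonable time. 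Once a candidate $I$ is identified, the verification reduces to exhibiting a prime $P \in \Ass(I^{k+1}) \setminus \Ass(I^k)$ while $\depth(R/I^k) = \depth(R/I^{k+1})$, or the symmetric situation.
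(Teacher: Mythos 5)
Your instinct about the outcome is right---the conjecture is false, and the only viable route is an explicit counterexample---but your proposal stops at a search plan and never produces one. That is a genuine gap: a description of how one would look for a counterexample, however sensible, is not a disproof. The paper settles the question by writing down a concrete matroidal ideal of degree $4$ in $n=8$ variables (with $44$ square-free generators) for which $\dstab(I)=2$ and $\astab(I)=3$. Note also that your ``smallest interesting case,'' with both indices in $\{2,3\}$ in degree $d\geq 3$, cannot occur in degree $3$: for matroidal ideals of degree $3$ the equality $\astab(I)=\dstab(I)$ is already known (\cite[Corollary 3.13]{MN}, quoted in the introduction), so the hunt must begin in degree $d=4$, which is exactly where the paper's example lives.

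A second, smaller issue is your verification criterion. Exhibiting one step $k$ with $\Ass(I^{k+1})\neq\Ass(I^k)$ while $\depth(R/I^k)=\depth(R/I^{k+1})$ does not by itself pin down either index: equality of two consecutive depths does not rule out a later drop, and a new associated prime at step $k+1$ only gives a lower bound on $\astab(I)$. The paper closes both ends with finitely many checks: for $\dstab(I)=2$ it verifies $I^2:\bigl((x_2x_3x_7x_8)x_1x_5x_6\bigr)=\frak{m}$, so $\depth R/I^2=0$, and invokes Proposition \ref{P1} to see that depth has already reached its limiting value $s-1=n-\ell(I)$ (while $\depth R/I=d-1=3$ excludes $\dstab(I)=1$); for $\astab(I)=3$ it combines $\Ass(I^2)\neq\Ass(I^3)=\Ass(I^4)$ with the a priori bound $\astab(I)\leq d=4$ of Corollary \ref{C1} and the persistence property. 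You correctly anticipate using the bound $\astab(I),\dstab(I)\leq\min\{d,\ell(I)\}$ and the depth limit $n-\ell(I)$ to prune the search, but the argument is only complete once the explicit ideal and these finite certificates are supplied.
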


This conjecture in the following cases has a positive answer: (1) if $I$ is a polymatroidal ideal of degree $2$, (\cite[Theorem 2.12]{KM}); (2) if $I$ is a matroidal ideal of degree $3$, (\cite[Corollary 3.13]{MN}); (3) if $I$ is a transversal polymatroidal ideal, \cite[Corollaries 4.6, 4.14]{HRV}; (4) if $I$ is a Veronese type polymatroidal ideal \cite[Corollary 5.7]{HRV}, then $\astab(I)=\dstab(I)$. This conjecture, in general, has a negative answer, see \cite[Examples 2.21, 2.22]{KM}. However, it is still remained for all matroidal ideals.

The following question asked by Karimi and the second author \cite{KM}:
\begin{Question}\label{Q}
Let $I$ be any matroidal ideal. Then can we have $\astab(I),\dstab(I)\leq d$, where $d$ is the degree of $I$?
\end{Question}

If $I$ is a matroidal ideal of degree $2$ or a matroidal ideal of degree $3$ with $\frak{m}\notin\Ass^{\infty}(I)$, then the question has a positive answer, (see \cite[Corollary 2.13]{KM} and \cite[Proposition 3.8]{MN}). Also, if $I$ is a transversal matroidal ideal or if $I$ is a Veronese type matroidal ideal, then again this question has a positive answer (see (\cite[Corollaries 4.6, 4.14, 5.7]{HRV}).

In this paper, by a counterexample, we show that the Conjecture \ref{Co} in general for all matroidal ideals has a negative answer. Moreover, we give an affirmative answer to the Question \ref{Q}, that is, $\astab(I),\dstab(I)\leq d$ for all matroidal ideals.

For each unexplained notion or terminology, we refer the reader to \cite{HH1}. The explicit examples were performed with the help of the computer algebra system Macaulay2 \cite{GS}.

\section{The results}
Throughout this section we assume that $I$ is a matroidal ideal of $R$ with $\gcd(I)=1$ and $\supp(I)=\{x_1,\ldots,x_n\}$, where $\supp(I)=\cup_{i=1}^t\supp(u_i)$ such that $\supp(u)=\{x_i| u=x_1^{a_1}\ldots x_n^{a_n}, a_i\neq 0\}$ and $\gcd(I)=\gcd(u_1,\ldots,u_t)$ for $G(I)=\{u_1,\ldots,u_t\}$.

\begin{Definition}\cite[Definition 3.1]{HQ}
Let $G(I)=\{u_1,\ldots,u_t\}$. Then {\it linear relation graph} $\Gamma$ of $I$ is the graph with edge set
$E(\Gamma)=\{\{i,j\}:$ there exist $u_k,u_l\in G(I)$ such that $x_iu_k=x_ju_l\}$ and vertex set $V(\Gamma)=\cup_{\{i,j\}\in E(\Gamma)}\{i,j\}$.
\end{Definition}

Following \cite{HQ}, let $G$ be a finite simple graph with vertex set $V(G)$ and edge set $E(G)$. We denote by $c(G)$ the number of connected components of $G$, and for a subset $T\subset V(G)$ we denote by $G_T$ the graph restricted to the vertex set $T$. In particular, for each vertex $v\in V(G)$ we set $G\setminus v=G_{V(G)\setminus\{v\}}$. A vertex $v$ of $G$ is called a {\it cutpoint} if $c(G)<c(G\setminus v)$. A connected graph with no cutpoints is called {\it biconnected}. We have the following fact from graph theory:

A maximal biconnected subgraph of $G$ is called a biconnected component of $G$. Let $G_1,\dots,G_t$ be the biconnected components of $G$. Then $G=\cup_{i=1}^tG_i$ and each two distinct biconnected components intersect at most in a cutpoint. If $G$ is a biconnected, then each two distinct edges belong to a cycle.

\begin{Lemma}\label{L1}
Let $G$ be a graph and $I$ be a matroidal ideal attached to the graph matroid of $G$. Then $m\in\Ass^{\infty}(I)$ if and only if the linear relation graph $\Gamma$ of $I$ is a complete graph.
\end{Lemma}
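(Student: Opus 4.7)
\emph{Plan.} The idea is to push both sides of the equivalence through the analytic spread $\ell(I)$, using the matroid $M(G)$ as an intermediary. First, invoking facts (4) and (6) from the introduction, the depth function $k \mapsto \depth(R/I^k)$ is non-increasing on powers of $I$ with limit $n - \ell(I)$; hence $\mm \in \Ass^{\infty}(I)$ is equivalent to the eventual depth being zero, which in turn is equivalent to $\ell(I) = n$. The problem therefore reduces to proving that $\ell(I) = n$ if and only if $\Gamma$ is the complete graph on $\{1,\ldots,n\}$.

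Next, I would compute $\ell(I)$ in terms of $M(G)$. Since $I$ is equigenerated, $\ell(I)$ equals the Krull dimension of the base ring $K[G(I)]$, and the standard formula for the base ring of a matroid gives $\ell(I) = n - c(M(G)) + 1$, where $c(M(G))$ denotes the number of matroid-connected components of $M(G)$. Hence $\ell(I) = n$ exactly when $M(G)$ is connected as a matroid.

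Finally, I would translate connectedness of $M(G)$ into completeness of $\Gamma$. Unwinding the definition, $\{i,j\} \in E(\Gamma)$ precisely when there are spanning trees $B, B'$ of $G$ with $B \triangle B' = \{i,j\}$, and a short basis-exchange argument shows this is equivalent to $i$ and $j$ lying in a common circuit of $M(G)$. The hypothesis $\gcd(I) = 1$ rules out coloops of $M(G)$, and together with the polymatroidal exchange property this forces $V(\Gamma) = \{1,\ldots,n\}$; invoking the classical equivalence that a matroid is connected iff every pair of ground-set elements lies in a common circuit closes the chain. The technical core of this plan is the second step---pinning down $\ell(I) = n - c(M(G)) + 1$ either via White's base-ring dimension formula or via the linear-relation-graph computation implicit in Herzog--Qureshi~\cite{HQ}---while the remaining steps reduce to careful translation of definitions.
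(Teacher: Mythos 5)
Your proposal is correct, and it reaches the statement by a genuinely different route than the paper. The paper splits the two implications: for ($\Leftarrow$) it goes from completeness of $\Gamma$ to $\ell(I)=|V(\Gamma)|=n$ via \cite[Proposition 4.4]{HQ} and then to $\mm\in\Ass^{\infty}(I)$ via \cite[Corollary 1.6]{HQ}; for ($\Rightarrow$) it asserts that $G$ is biconnected, uses the graph-theoretic fact that any two edges of a biconnected graph lie on a common cycle $C$, extends $E(C)\setminus\{e_i\}$ to a spanning tree, and performs the exchange $e_i\leftrightarrow e_j$ to produce the edge $\{i,j\}$ of $\Gamma$. You instead run a single chain of equivalences $\mm\in\Ass^{\infty}(I)\iff\ell(I)=n\iff M(G)$ connected $\iff$ every two elements lie on a common circuit $\iff \Gamma$ complete, replacing \cite[Proposition 4.4]{HQ} by White's formula $\dim K[G(I)]=n-c(M(G))+1$ for the basis monomial ring, and replacing the cycle-through-two-edges argument by Whitney's circuit characterization of matroid connectivity together with the standard fact that $\{i,j\}\in E(\Gamma)$ iff $i$ and $j$ are basis-exchangeable iff they lie on a common circuit (your fundamental-circuit argument for this is the right one, and $\gcd(I)=1$ indeed excludes coloops). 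What your approach buys is uniformity and generality: it treats both implications at once, it makes explicit the biconnectivity claim that the paper leaves unjustified (connectivity of the graphic matroid is exactly 2-connectivity of $G$), and it proves the lemma for an arbitrary matroidal ideal rather than only for graphic matroids, which is actually what the later applications (Theorem \ref{T1} onward) need. The cost is reliance on two external results (White's dimension formula and Whitney's theorem) in place of the self-contained spanning-tree construction.
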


\begin{proof}
$(\Longleftarrow)$. Suppose that the linear relation graph $\Gamma$ of $I$ is a complete graph. Then by \cite[Proposition 4.4]{HQ} we have $\ell(I)=\mid E(G)\mid$. Since $I$ is a matroidal ideal attached to the graph matroid of $G$, it follows $\mid E(G)\mid=n$. Therefore by \cite[Corollary 1.6]{HQ} we conclude $m\in\Ass^{\infty}(I)$.\\
$(\Longrightarrow)$. Let $G$ be a graph and $I\subset K[x_i| e_i\in E(G)]$ be a matroidal ideal attached to the graph matroid of $G$ such that $m\in\Ass^{\infty}(I)$. Then $G$ is a biconnected graph and $I$ is generated by monomials $u_F=\prod_{e_i\in E(F)}x_i$, where $F$ is a spanning forest of $G$, i.e., a subgraph of $G$ which is a forest and for which $V(F)=V(G)$. From the definition of $\Gamma$ we have $\{i,j\}$ is an edge of $\Gamma$ if there exists a spanning forest $F$ with $e_i\in E(F)$ such that $(E(F)\setminus\{e_i\})\cup\{e_j\}$ is also a spanning forest of $G$. If $e_i,e_j\in E(G)$, then there is a cycle $C$ in $G$ which is $e_i,e_j\in E(C)$. Therefore we can construct a spanning forest $F$ of $G$ such that $E(C)\setminus\{e_i\}$ is contained in $E(F)$ and $(E(F)\setminus\{e_i\})\cup\{e_j\}$ is again a spanning tree of $G$. Therefore $\{i,j\}\in E(\Gamma)$ and so $\Gamma$ is a complete graph, as required.
\end{proof}

\begin{Lemma}\label{L3}
Let $I$ be a matroidal ideal of degree $d$ and $A_i=\{u\in G(I)| \deg_{x_i}(u)=1\}$. Then $|A_i|\geq d$.
\end{Lemma}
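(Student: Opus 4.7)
The plan is to start from any single generator $u\in A_i$, which exists because $\supp(I)=\{x_1,\dots,x_n\}$, and then use the exchange property of matroidal ideals together with the assumption $\gcd(I)=1$ to manufacture $d-1$ further distinct generators in $A_i$, one obtained by swapping out each of the variables of $u$ other than $x_i$.

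More precisely, fix $u\in A_i$ and write $u=x_i x_{j_1}\cdots x_{j_{d-1}}$ with the $j_s$ pairwise distinct and all different from $i$ (this uses that $I$ is square-free of degree $d$). For each $s\in\{1,\dots,d-1\}$, the hypothesis $\gcd(I)=1$ furnishes some $w_s\in G(I)$ with $\deg_{x_{j_s}}(w_s)=0$, so that $\deg_{x_{j_s}}(u)=1>0=\deg_{x_{j_s}}(w_s)$. Applying the exchange property to the pair $(u,w_s)$ at the variable $x_{j_s}$ yields an index $k=k(s)$ such that $\deg_{x_k}(w_s)>\deg_{x_k}(u)$ and
\[
 u_s:=x_k\bigl(u/x_{j_s}\bigr)\in G(I).
\]
From $\deg_{x_k}(u)=0$ I conclude $k\notin\{i,j_1,\dots,j_{d-1}\}$ (in particular $k\ne i$), so $x_i$ still divides $u_s$; thus $u_s\in A_i$.

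It remains to observe that the $d$ monomials $u,u_1,\dots,u_{d-1}$ are pairwise distinct. Indeed, $u_s$ is the unique element of the list not divisible by $x_{j_s}$: $u$ is divisible by $x_{j_s}$ by construction, and for $s'\ne s$ the monomial $u_{s'}$ still contains $x_{j_s}$ because we only removed $x_{j_{s'}}$ from $u$. This gives $|A_i|\ge d$.

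I do not anticipate a genuine obstacle; the only subtle points are (i) making sure the exchanged variable $x_k$ is distinct from $x_i$, which follows automatically from $\deg_{x_k}(u)=0$ since $x_i$ divides $u$, and (ii) verifying distinctness of the constructed generators, which is immediate from the ``each $u_s$ misses exactly $x_{j_s}$'' observation. The degenerate case $d=1$ is trivial: $A_i=\{x_i\}$ has size $1=d$.
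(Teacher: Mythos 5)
Your proof is correct, and it takes a genuinely different route from the paper's. You argue directly: starting from one generator $u\in A_i$ (nonempty because $x_i\in\supp(I)$), you use $\gcd(I)=1$ to find, for each variable $x_{j_s}$ of $u$ other than $x_i$, a generator avoiding $x_{j_s}$, and then a single application of the exchange property swaps $x_{j_s}$ out for a new variable $x_k\notin\supp(u)$ (so in particular $k\neq i$), producing $d-1$ new members of $A_i$; distinctness is clean because $u_s$ is the unique element of your list not divisible by $x_{j_s}$. The paper instead proceeds by induction on $d$ and by contradiction: assuming $|A_i|<d$, it passes to the colon ideal $(I:x_i)$, which is again matroidal of degree $d-1$ with trivial gcd, and plays off a lower bound on the size of its support (forced by $\gcd(I:x_i)=1$) against an upper bound (forced by $|A_i|<d$ together with the inductive hypothesis), after first reducing to $d\le n-2$ via a cited lemma of Karimi--Mafi. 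Your argument is more elementary and self-contained --- it uses only the exchange axiom and the standing hypotheses $\gcd(I)=1$, $\supp(I)=\{x_1,\ldots,x_n\}$, and is constructive in that it exhibits the $d$ generators explicitly --- whereas the paper's proof leans on the machinery of colon ideals of polymatroidal ideals that it uses elsewhere. Both are valid; yours also makes transparent exactly where each hypothesis enters.
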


\begin{proof}
By \cite[Lemma 2.16]{KM} we may assume that $d\leq n-2$.
Now, we use induction on $d$. If $d=2$ , then by definition of matroidal it is clear that $\mid A_i\mid\geq 2$. Suppose that the result has been proved for $d-1$. Now, we show that the result holds for $d$. By contrary, we may assume that $x_1$ is a variable such that $\mid A_1\mid<d$. Since $(I:x_1)$ is a matroidal ideal of degree $d-1$ in $K[x_{i_1},\ldots,x_{i_t}]$ where $t\leq n-1$ and also $\gcd(I:x_1)=1$, it follows that $d-1<t$. On the other hand, since $\mid A_1\mid<d$ we have $t\leq d-1$. This is a contradiction. Therefore $|A_i|\geq d$, as required.
\end{proof}

From now on we use $\Gamma_I$ as the linear relation graph $\Gamma$ of $I$.
\begin{Theorem}\label{T1}
Let $I$ be a matroidal ideal of degree $d$. If $\frak{m}\in\Ass^{\infty}(I)$, then $\depth R/I^d=0$. In particular, $\dstab(I)\leq d$.
\end{Theorem}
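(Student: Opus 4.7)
The plan is to reduce the second assertion to the first using the non-increasing depth property for polymatroidal ideals. Because $I$ is matroidal (hence polymatroidal), property~(4) of the introduction ensures $k\mapsto\depth(R/I^k)$ is non-increasing. So once $\depth(R/I^d)=0$, we automatically have $\depth(R/I^k)=0$ for every $k\geq d$, forcing $\dstab(I)\leq d$. Thus the work is to establish $\depth(R/I^d)=0$, equivalently $\frak{m}\in\Ass(R/I^d)$.

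To prove $\frak{m}\in\Ass(R/I^d)$ I would produce a monomial $w\in R$ with $(I^d:w)=\frak{m}$. By Lemma~\ref{L3} applied to the variable $x_1$ we have $|A_1|\geq d$, so we may select $d$ distinct generators $u_1,\dots,u_d\in A_1\subseteq G(I)$, each of $x_1$-degree exactly $1$. The proposed witness is
\[
w \;=\; \frac{u_1u_2\cdots u_d}{x_1},
\]
a monomial of degree $d^2-1$. Since $I^d$ is generated in degree $d^2$, every monomial of $I^d$ has degree at least $d^2$, so $w\notin I^d$; and $x_1 w = u_1\cdots u_d\in I^d$ is automatic.

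The substantive step is to show $x_j w\in I^d$ for every $j\in\{2,\dots,n\}$. Setting $B_k=\supp(u_k)$, the exponent vector of $x_j w$ is $\sum_{k=1}^d\mathbf{1}_{B_k}-e_1+e_j$, and the task becomes realizing this vector as $\sum_{k=1}^d\mathbf{1}_{B_k'}$ for some bases $B_1',\dots,B_d'$ of the underlying matroid, i.e.\ performing the polymatroidal exchange $e_1\rightsquigarrow e_j$ inside the polymatroid $d\mathcal{M}$ attached to $I^d$. Here the hypothesis $\frak{m}\in\Ass^{\infty}(I)$ enters through \cite[Corollary~1.6]{HQ}, which translates it to $\ell(I)=n$ and in turn forces the matroid underlying $I$ to be connected. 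Combined with Lemma~\ref{L3} (giving enough generators in each $A_i$), this connectedness lets us choose the $u_k\in A_1$ so that for each $j\neq 1$ the coordinate $(\sum_k\mathbf{1}_{B_k})_j$ is strictly less than $d$ (equivalently, no single $j\neq 1$ lies in all $d$ chosen supports), which is exactly the condition under which the polymatroidal exchange for $I^d$ (polymatroidal by property~(1)) produces the required factorization of $x_j w$.

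The main obstacle is the simultaneous selection of $u_1,\dots,u_d\in A_1$ that makes the exchange succeed for every $j\in\{2,\dots,n\}$ at once. Lemma~\ref{L3} supplies enough candidates in $A_1$, but ensuring that the resulting sum of basis indicators admits the exchange against each $e_j$ needs the matroid-connectedness guaranteed by $\ell(I)=n$; some care is required to rule out ``bad'' subsets $S\subseteq[n]\setminus\{1\}$ on which every chosen basis would restrict to a basis of $S$. Once the selection is in place, the polymatroidal exchange delivers $x_j w\in I^d$ for all $j$, so $(I^d:w)=\frak{m}$ and $\depth(R/I^d)=0$, whence $\dstab(I)\leq d$ by the reduction above.
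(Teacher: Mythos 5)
Your reduction of the second assertion to the first via the non-increasing depth property is fine and matches the paper. The real issue is the central step, which you yourself flag as ``the main obstacle'' and then do not carry out: showing $x_jw\in I^d$ for your witness $w=u_1\cdots u_d/x_1$. The exchange property of polymatroidal ideals only guarantees that for a \emph{suitable} index $j$ the monomial $x_j(x_1w)/x_1$ lies in $G(I^d)$; it never lets you prescribe $j$. The sufficient condition you assert --- that no single $j\neq 1$ lies in all $d$ chosen supports --- is not in fact sufficient. For a concrete failure, take the rank-$2$ matroid on $\{1,\dots,6\}$ with parallel classes $\{1,5\},\{2,3,4\},\{6\}$, so $I$ is matroidal of degree $2$ with $\Gamma_I$ complete and $\frak{m}\in\Ass^{\infty}(I)$; choosing $u_1=x_1x_2$, $u_2=x_1x_3\in A_1$ gives $w=x_1x_2x_3$ satisfying your coordinate condition, yet $x_4w=x_1x_2x_3x_4\notin I^2$ because any factorization must pair two elements of the class $\{2,3,4\}$. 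A good choice of $u_1,u_2$ does exist here, but your selection criterion does not identify it: as you note, one must control all ``bad'' subsets $S\subseteq[n]\setminus\{1\}$ with $j\in S$ on which the chosen bases saturate the rank, and that is precisely the part of the argument that is missing.

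By contrast, the paper sidesteps the multi-generator exchange entirely. It uses Lemma \ref{L1} to get that $\Gamma_I$ is a \emph{complete} graph, takes a single generator $v=x_{i_1}\cdots x_{i_d}\in G(I)$, and sets $u=v^{d-1}x_{i_{d+1}}\cdots x_{i_{2d-1}}$ (also of degree $d^2-1$). Every membership it needs, namely $(x_{i_1}\cdots\widehat{x_{i_t}}\cdots x_{i_d})x_j\in I$, is a \emph{single} variable swap on the one generator $v$, and each such swap is certified by an edge of the complete graph $\Gamma_I$; the factorization of $ux_j$ into $d$ such swapped generators is then elementary bookkeeping. You never invoke $\Gamma_I$ or Lemma \ref{L1}, which is exactly the tool that replaces the unproved ``choose your own exchange partner'' step in your outline. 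As written, your proposal has a genuine gap at its key step; to repair it along your lines you would either have to prove the selection of $u_1,\dots,u_d$ avoiding all rank-saturated sets, or switch to the paper's single-generator witness.
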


\begin{proof}
Suppose $\frak{m}\in\Ass^{\infty}(I)$. Therefore, by Lemma \ref{L1}, it follows $\Gamma_I$ is a connected complete graph. Since for every $1\leq i\neq j\leq n$, we have $\{i,j\}\in \Gamma_I$ and also by Lemma \ref{L3} we may assume that $|A_{i_1}|\geq |A_{i_2}|\geq\ldots\geq |A_{i_{2d-1}}|\geq |A_{i_j}|\geq d$ for all $2d-1<j\leq n$. We may set $u=(x_{i_1}\ldots x_{i_d})^{d-1}x_{i_{d+1}}\ldots x_{i_{2d-1}}$. If $d+t=n$ for some $1\leq t\leq d-1$, then we consider $x_{i_{d+t}}=\ldots=x_{i_{2d-1}}=x_n$. Since $I$ is the matroidal ideal attached to the graphic matroid of $G$ is generated by the monomials $u_F=\prod_{e_i\in E(F)}x_i$, where $F$ is a spanning forest of $G$, it follows that $x_{i_1}\ldots x_{i_d}\in I$. Again, by definition of $\Gamma_I$, we have $\{i,j\}\in \Gamma_I$ for every $1\leq i\neq j\leq n$ and so for the spanning forest $F$ with $e_{i_t}\in E(F)$ for all $1\leq t\leq d$ we have $(E(F)\setminus\{e_{i_t}\})\cup\{e_{i_j}\}$ is also a spanning forest of $G$ for all $1\leq j\leq n$. Thus $(x_{i_1}\ldots\widehat{x_{i_t}}\ldots x_{i_d})x_{i_j}\in I$ for all $1\leq t\leq d$ and $1\leq j\leq n$ and so $ux_j\in I^d$ for all $1\leq j\leq n$. Since $u\notin I^d$, it therefore follows that $I^d:u=\frak{m}$ and so $\frak{m}\in\Ass(I^d)$ and $\depth R/I^d=0$. Hence $\dstab(I)\leq d$, as required.
\end{proof}

\begin{Example}
For $n=6$, the matroidal ideal
$$I=(x_1x_3,x_1x_4,x_1x_5,x_1x_6,x_2x_3,x_2x_4,x_2x_5,x_2x_6,x_3x_5,x_3x_6,x_4x_5,x_4x_6)$$ constructed in \cite{HH1}.
Then $(I^2:x_1x_3x_5)=\frak{m}$.
\end{Example}

The monomial localization of a monomial ideal $I$ with respect to a monomial prime ideal $\frak{p}$ is the monomial ideal $I(\frak{p})$ which is obtained from $I$ by substituting the variables $x_i\notin\frak{p}$ by $1$. If $I$ is a square-free monomial ideal, then it is clear that $I(\frak{p})=I:(\prod_{x_i\notin{\frak{p}}}x_i)$. Also, for a monomial ideal $I$ of $R$ and for each $1\leq i\leq n$, we set $I[i]=(x^t[i]: x^t\in I)$ where $x^t[i]$ is the monomial $x_1^{t_1}\ldots\widehat{x_i^{t_i}}\ldots x_n^{t_n}$ such that the term $x_i^{t_i}$ of $x^t$ is omitted.

\begin{Corollary}\label{C1}
Let $I$ be a matroidal ideal of degree $d$. Then $\astab(I)\leq d$. In particular, if $\frak{m}\notin\Ass^{\infty}(I)$, then $\astab(I)\leq d-1$.
\end{Corollary}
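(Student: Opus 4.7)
The plan is to reduce Corollary \ref{C1} to Theorem \ref{T1} via the standard monomial-localization dictionary for associated primes. Fix any $\mathfrak{p}\in\Ass^\infty(I)$; since $I$ is a monomial ideal, $\mathfrak{p}$ is a monomial prime, say $\mathfrak{p}=(x_i:i\in S)$, and I would pass to the polynomial ring $R(\mathfrak{p})=K[x_i:i\in S]$ with graded maximal ideal $\mathfrak{m}_{\mathfrak{p}}$. The standard equivalence $\mathfrak{p}\in\Ass(R/I^k) \iff \mathfrak{m}_{\mathfrak{p}}\in\Ass(R(\mathfrak{p})/I(\mathfrak{p})^k)$, applied at every $k$, converts the hypothesis into $\mathfrak{m}_{\mathfrak{p}}\in\Ass^\infty(I(\mathfrak{p}))$. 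Property (2) of polymatroidal ideals from the introduction ensures that $I(\mathfrak{p})=(I:\prod_{i\notin S}x_i)$ is polymatroidal, and being squarefree it is matroidal of some single degree $d'\le d$.

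Next I would apply Theorem \ref{T1} to $I(\mathfrak{p})\subset R(\mathfrak{p})$ to obtain $\depth R(\mathfrak{p})/I(\mathfrak{p})^{d'}=0$, and hence $\mathfrak{m}_{\mathfrak{p}}\in\Ass(I(\mathfrak{p})^{d'})$. The persistence property for polymatroidal ideals (property (4)) then propagates this membership to every $k\ge d'$, in particular to $k=d$, and translating back through the localization dictionary gives $\mathfrak{p}\in\Ass(I^d)$. Since $\mathfrak{p}\in\Ass^\infty(I)$ was arbitrary and persistence already supplies $\Ass(I^d)\subseteq\Ass^\infty(I)$, we obtain $\Ass(I^d)=\Ass^\infty(I)$, whence $\astab(I)\le d$.

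For the sharpening I would assume $\mathfrak{m}\notin\Ass^\infty(I)$, so each $\mathfrak{p}\in\Ass^\infty(I)$ is proper and I may choose an index $k\in\{1,\ldots,n\}\setminus S$. The standing hypothesis $\supp(I)=\{x_1,\ldots,x_n\}$ yields a generator $u\in G(I)$ with $x_k\mid u$; its image in $R(\mathfrak{p})$ sits in $I(\mathfrak{p})$ and has degree at most $d-1$. This image cannot equal $1$, for otherwise $I(\mathfrak{p})=R(\mathfrak{p})$, contradicting $\mathfrak{m}_{\mathfrak{p}}\in\Ass^\infty(I(\mathfrak{p}))$. Hence the single degree $d'$ of $I(\mathfrak{p})$ satisfies $d'\le d-1$, and rerunning the previous argument with $d-1$ in place of $d$ yields $\mathfrak{p}\in\Ass(I^{d-1})$, so $\astab(I)\le d-1$.

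The main obstacle I foresee is justifying the strict degree drop $d'\le d-1$ in the proper case, which relies crucially on the standing support hypothesis: without a variable genuinely outside $\mathfrak{p}$ that actually appears in some minimal generator, a proper monomial localization could keep the degree equal to $d$ and only the weaker bound $\astab(I)\le d$ would survive. Everything else is a direct assembly of Theorem \ref{T1}, the persistence property, and the localization criterion for associated primes.
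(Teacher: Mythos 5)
Your proof of the first assertion is essentially the paper's: localize at an arbitrary $\mathfrak p\in\Ass^\infty(I)$, note $I(\mathfrak p)$ is matroidal of degree $d'\le d$ with $\mathfrak m_{\mathfrak p}\in\Ass^\infty(I(\mathfrak p))$, apply Theorem \ref{T1}, and push back up; this part is correct and matches the paper step for step. For the sharpened bound $\astab(I)\le d-1$ when $\mathfrak m\notin\Ass^\infty(I)$, you take a genuinely different route. The paper invokes the deletion formula $\Ass(I^l)=\cup_{i=1}^n\Ass(I[i]^l)$ (cited from Trung and from Karimi--Mafi), observes that each deletion $I[i]$ is matroidal of degree $d-1$, and applies the first assertion to each $I[i]$. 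You instead argue pointwise at each associated prime: since $\mathfrak p$ is proper, some variable $x_k\notin\mathfrak p$ lies in $\supp(I)$, so a generator divisible by $x_k$ maps to an element of $I(\mathfrak p)$ of degree at most $d-1$, forcing the generating degree $d'$ of the (equigenerated) ideal $I(\mathfrak p)$ to drop to at most $d-1$; Theorem \ref{T1} then already gives $\mathfrak p\in\Ass(I^{d-1})$. Your argument is more self-contained --- it avoids the external union formula entirely and reuses only the localization dictionary from the first half --- while the paper's version outsources the combinatorics to a cited decomposition. Both are valid; note only that your appeal to Theorem \ref{T1} for $I(\mathfrak p)$ tacitly assumes the section's standing hypotheses ($\gcd=1$, full support) transfer to the localization, but this is automatic from $\mathfrak m_{\mathfrak p}\in\Ass^\infty(I(\mathfrak p))$ and is a gap shared with the paper's own proof.
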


\begin{proof}
Suppose $\frak{p}\in\Ass^{\infty}(I)$. Then $\frak{p}R_{\frak{p}}\in\Ass^{\infty}(I(\frak{p}))$.\\
Hence by Theorem \ref{T1} $\depth R_{\frak{p}}/I(\frak{p})^{\deg(I(\frak{p}))}=0$. Since $\deg(I(\frak{p}))\leq\deg(I)=d$, it follows that
$\depth R_{\frak{p}}/I(\frak{p})^d=0$. Therefore $\frak{p}R_{\frak{p}}\in\Ass(I^d(\frak{p}))$ and so $\frak{p}\in\Ass(I^d)$. Thus $\Ass^{\infty}(I)=\Ass(I^d)$ and so $\astab(I)\leq d$.
Suppose $\frak{m}\notin\Ass^{\infty}(I)$. Then we have $\Ass(I^d)=\cup_{i=1}^n\Ass(I[i]^d)$, (see \cite[Remark]{T} or \cite[Remark 2.6]{KM}).
From the first proof we conclude that $\astab(I[i])\leq d-1$ for all $1\leq i\leq n$. It therefore follows $\astab(I)\leq d-1$, as required.
\end{proof}

One of the most distinguished polymatroidal ideals is the ideal of Veronese type. For the fixed positive integers $d$ and $1\leq a_1\leq\ldots\leq a_n\leq d$. The ideal of Veronese type of $R$ indexed by $d$ and $(a_1,\ldots,a_n)$ is the ideal $I_{(d; a_1,\ldots,a_n)}$ which is generated by those monomials $u=x_1^{t_1}\ldots x_n^{t_n}$ of $R$ of degree $d$ with $t_j\leq a_j$ for each $1\leq j\leq n$. Note that if $a_j=1$ for all $j$, then $I_{(d; 1,\ldots,1)}$ is a Veronese type matroidal ideal.

\begin{Lemma}\label{L4}
Let $I$ be a matroidal ideal of degree $d$. Then $I\subseteq I_1\cap\ldots\cap I_s$, where $I_1,\ldots,I_s$ are Veronese type matroidal ideals with $\deg(I_i)=d_i$ for $1\leq i\leq s$ and $d=d_1+\ldots+d_s$ and also generated by the set of product of vertices of the connected components $\Gamma_1,\ldots,\Gamma_s$ of $\Gamma_I$.
\end{Lemma}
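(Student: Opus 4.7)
The idea is to identify, for each connected component $\Gamma_t$ of $\Gamma_I$, a ``conserved'' partial degree sum
\[
\sigma_t(u) \;=\; \sum_{i \in V(\Gamma_t)} \deg_{x_i}(u),
\]
which I want to be invariant as $u$ ranges over $G(I)$. Once $\sigma_t$ is shown to be well defined on $G(I)$ and $d = \sum_t d_t$ is checked, the Veronese type matroidal ideal $I_t$ on the variables $\{x_i : i \in V(\Gamma_t)\}$ generated in degree $d_t := \sigma_t(u)$ will automatically contain every $u \in G(I)$, since the restriction of $u$ to those variables is a squarefree monomial of degree $d_t$ dividing $u$. So the lemma reduces to establishing the invariance of $\sigma_t$ and the partition of variables.

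First I would verify $V(\Gamma_I) = \{1,\ldots,n\}$. Indeed, if $i \notin V(\Gamma_I)$ then no edge of $\Gamma_I$ is incident to $i$; applying the polymatroidal exchange property in both directions to any pair $u,v \in G(I)$ forces $\deg_{x_i}(u) = \deg_{x_i}(v)$ (otherwise a relation $x_i u' = x_j u$ would create such an edge). Combined with $x_i \in \supp(I)$, this yields $\deg_{x_i} \equiv 1$ on $G(I)$, contradicting $\gcd(I) = 1$. Consequently $\sum_t d_t = \deg(u) = d$ as soon as the $d_t$ are well defined.

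The main step is the invariance of $\sigma_t$. Given $u,v \in G(I)$, I would iterate the exchange property to produce a chain
\[
u = u_0,\, u_1,\,\ldots,\, u_N = v, \qquad u_{k+1} = x_{j_k}(u_k/x_{i_k}),
\]
of single exchanges inside $G(I)$. Each step witnesses $x_{i_k} u_{k+1} = x_{j_k} u_k$, so $\{i_k, j_k\} \in E(\Gamma_I)$, and hence $i_k$ and $j_k$ lie in the same connected component $\Gamma_{t_k}$. The step shifts $\deg_{x_{i_k}}$ by $-1$ and $\deg_{x_{j_k}}$ by $+1$, leaving $\sigma_{t_k}$ (and every other $\sigma_t$) unchanged. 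Iterating gives $\sigma_t(u) = \sigma_t(v)$ for every $t$, so $d_t$ is well defined.

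\textbf{Main obstacle.} The subtle point is producing the chain of single exchanges between arbitrary $u, v \in G(I)$. The standard route is induction on $\deg\bigl(u/\gcd(u,v)\bigr)$: whenever this quantity is positive choose $i$ with $\deg_{x_i}(u) > \deg_{x_i}(v)$, apply the polymatroidal exchange property to obtain $j$ with $\deg_{x_j}(v) > \deg_{x_j}(u)$ and $x_j(u/x_i) \in G(I)$, and check that the new generator is strictly closer to $v$ in the chosen measure. Once this chain is in hand, the rest of the proof is straightforward bookkeeping with squarefree monomials and the definition of Veronese type.
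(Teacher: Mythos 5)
Your proof is correct, and it takes a genuinely different route from the paper's. The paper cites \cite[Proposition 2.3]{HV} for $V(\Gamma_I)=\{1,\ldots,n\}$ and $s\leq d$, splits off the case $\frak{m}\in\Ass^{\infty}(I)$ (where $\Gamma_I$ is complete by Lemma \ref{L1} and the containment is immediate), and then argues by contradiction: if some generator $v$ missed $I_1$, a \emph{single} application of the exchange property against a generator $u$ with $x_1\mid u$ would manufacture an edge of $\Gamma_I$ joining $V(\Gamma_1)$ to its complement, which is impossible. You instead prove a direct conservation law: the component-wise degree sums $\sigma_t$ are invariant along a chain of single exchanges connecting any two generators, because each exchange step is by definition an edge of $\Gamma_I$ and hence moves degree only within one component. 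The underlying mechanism (exchanges create edges, so degree cannot migrate between components) is the same, but your version buys something real: it actually establishes that the degrees $d_t$ are well defined on all of $G(I)$, a point the paper's contradiction argument does not fully address --- the negation of $v\in I_1$ is only that $v$ has \emph{fewer} than $d_1$ variables from $\supp(I_1)$, not that $\supp(v)\cap\supp(I_1)=\emptyset$ as the paper asserts, so your quantitative invariance statement is exactly what is needed to close that gap. The price is the exchange-chain lemma, but your induction on $\deg\bigl(u/\gcd(u,v)\bigr)$ is the standard and correct way to obtain it. Your first-paragraph argument that $V(\Gamma_I)=\{1,\ldots,n\}$ also correctly reproves, rather than cites, the needed input from \cite{HV}.
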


\begin{proof}
By \cite[Proposition 2.3]{HV}, we may assume that $s\leq d$ and $V(\Gamma_I)=\{1,\ldots,n\}$. If $\frak{m}\in\Ass^{\infty}(I)$ then, by Lemma \ref{L1},   $\Gamma_I$ is a complete graph and it is clear that $I$ is a subset of the Veronese type matroidal ideal of degree $d$. Hence we may assume that $\frak{m}\notin\Ass^{\infty}(I)$  and so $s\geq 2$ by \cite[Lemma 4.2]{HQ}. Now, suppose that there exists $1\leq i\leq s$ such that $I\not\subset I_i$. For simplicity, we may assume that $i=1$ and $\supp(I_1)=\{x_1,\ldots,x_t\}$, where $t<n$. Thus there are monomials $u,v\in G(I)$ such that $x_1\mid u$ and $\supp(v)\cap\{x_1,\ldots,x_t\}=\emptyset$. Therefore there exists $j>t$ such that $x_j\mid v$ and $x_j\nmid u$. By exchange property between $u,v$ we have $w=(u/x_1)x_j\in G(I)$ and so $x_1w=x_ju$. Hence $\{1,j\}\in E(\Gamma_1)$ and this is a contradiction since $j\notin V(\Gamma_1)$. Therefore $I\subseteq I_1\cap\ldots\cap I_s$, as required.
\end{proof}

\begin{Proposition}\label{P1}
Let $I$ be a matroidal ideal. Then we we have the following:
\begin{itemize}
\item[(1)] $\min\{\depth R/I^t\mid t\geq 1\}=s-1$, where $s$ is the number of connected component of $\Gamma_I$.
\item[(2)] $\dstab(I)=\min\{t\geq 1\mid \depth R/I^t=s-1\}$.
\end{itemize}
\end{Proposition}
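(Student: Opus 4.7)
The plan for part (1) is to combine properties (4) and (6) from the introduction with a description of $\ell(I)$ in terms of the linear relation graph $\Gamma_I$. By property (4), the sequence $(\depth R/I^t)_{t\geq 1}$ is non-increasing; as a non-increasing sequence of non-negative integers it stabilizes, so its minimum coincides with $\lim_{t\to\infty}\depth R/I^t$, which by property (6) equals $n-\ell(I)$. Hence the problem reduces to the analytic-spread identity $\ell(I)=n-s+1$.

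To establish this identity I would first verify that $V(\Gamma_I)=\{1,\dots,n\}$: for each $i$, the hypothesis $x_i\in\supp(I)$ gives a generator $u\in G(I)$ with $x_i\mid u$, while $\gcd(I)=1$ gives a generator $u'\in G(I)$ with $x_i\nmid u'$; applying the matroidal exchange property to the pair $(u,u')$ produces an index $j$ with $x_j\nmid u$, $x_j\mid u'$, and $x_j(u/x_i)\in G(I)$, which yields the edge $\{i,j\}\in E(\Gamma_I)$. The formula $\ell(I)=|V(\Gamma_I)|-s+1=n-s+1$ then follows from \cite[Proposition 4.4]{HQ} applied to our polymatroidal ideal. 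Substituting into $n-\ell(I)$ gives $\min_{t\geq 1}\depth R/I^t=s-1$, proving (1).

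For part (2) I would observe that $(\depth R/I^t)_{t\geq 1}$ is a non-increasing integer sequence stabilizing at its minimum $s-1$ by (1); hence the smallest index at which the value $s-1$ is first attained is precisely the smallest index from which the sequence is constant, which is the defining property of $\dstab(I)$.

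The principal difficulty is the identification $\ell(I)=n-s+1$. Should a direct citation prove insufficient, the fallback plan is to use Lemma \ref{L4} to embed $I$ in $I_1\cap\cdots\cap I_s$ with each $I_j$ a Veronese type matroidal ideal on the disjoint variable set $V(\Gamma_j)$, note that each $\Gamma_{I_j}$ is connected so that $\mathfrak{m}_j\in\Ass^{\infty}(I_j)$ and $\ell(I_j)=|V(\Gamma_j)|$, and then glue the fiber cones of $I_1,\dots,I_s$ across their disjoint supports (this gluing costs $s-1$ relations coming from identifying the Rees variable on each factor), yielding $\ell(I)=\sum_j|V(\Gamma_j)|-(s-1)=n-s+1$.
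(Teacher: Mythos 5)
Your proof is correct and follows essentially the same route as the paper: both reduce the statement to the identity $\ell(I)=n-s+1$ and then combine the limit formula $\lim_{t\to\infty}\depth R/I^t=n-\ell(I)$ of \cite[Corollary 3.5]{HRV} with the non-increasing depth function of polymatroidal ideals to get both (1) and (2). The only differences are cosmetic: you verify $V(\Gamma_I)=\{1,\dots,n\}$ directly from the exchange property where the paper cites \cite[Proposition 2.3]{HV}, the analytic spread formula you need is \cite[Lemma 4.2]{HQ} rather than \cite[Proposition 4.4]{HQ} (the latter concerns graphic matroids), and your fiber-cone fallback is unnecessary.
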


\begin{proof}
By \cite[Proposition 2.3]{HV} and \cite[Lemma 4.2]{HQ}, we have $V(\Gamma_I)=\{1,\ldots,n\}$ and $\ell(I)=n-s+1$. Now by using \cite[Corollary 3.5]{HRV} we have $\min\{\depth R/I^t\mid t\geq 1\}=s-1$.
By definition and \cite[Corollary 3.5]{HRV} we have $\dstab(I)=\min\{t\geq 1\mid \depth R/I^t=n-\ell(I)\}$. Again by \cite[Lemma 4.2]{HQ} we conclude that $\dstab(I)=\min\{t\geq 1\mid \depth R/I^t=s-1\}$, as required.
\end{proof}

\begin{Theorem}\label{T2}
Let $I$ be matroidal ideal of degree $d$ such that $\frak{m}\notin\Ass^{\infty}(I)$. Then $\dstab(I)\leq d-1$.
\end{Theorem}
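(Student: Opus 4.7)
My plan is to reduce the statement to a depth computation via Proposition~\ref{P1}(2), and then carry out that computation by combining Lemma~\ref{L4} with Theorem~\ref{T1}.

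By Corollary~\ref{C1}, the hypothesis $\mathfrak{m}\notin\Ass^{\infty}(I)$ already gives $\astab(I)\leq d-1$, hence $\Ass(I^{d-1})=\Ass^{\infty}(I)$. By Proposition~\ref{P1}(2), showing $\dstab(I)\leq d-1$ is equivalent to showing $\depth R/I^{d-1}=s-1$, where $s=c(\Gamma_I)$. The inequality $\depth R/I^{d-1}\geq s-1$ is immediate from Proposition~\ref{P1}(1), so only the matching upper bound needs to be established.

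For this upper bound I would invoke Lemma~\ref{L4}: $I\subseteq I_1\cap\ldots\cap I_s =: J$, where each $I_j$ is the Veronese type matroidal on the vertex set $V_j$ of the $j$-th connected component of $\Gamma_I$, of degree $d_j$, with $d_1+\ldots+d_s=d$. Since $\mathfrak{m}\notin\Ass^{\infty}(I)$ we have $s\geq 2$, so $d_j\leq d-1$ for every $j$. Each $\Gamma_{I_j}$ is complete on $V_j$, and hence $\mathfrak{m}_{V_j}\in\Ass^{\infty}(I_j)$ in the smaller polynomial ring $R_j=K[x_i:i\in V_j]$ (matching the easy direction of Lemma~\ref{L1}, which is straightforward for Veronese matroidals). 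Applying Theorem~\ref{T1} in $R_j$ gives $\depth R_j/I_j^{d_j}=0$, and by the non-increasing depth property (item~(4)) this extends to $\depth R_j/I_j^{d-1}=0$ for every $j$. Because the $V_j$ are pairwise disjoint, $J$ is simply the product $I_1\cdots I_s$ of ideals in disjoint variables; iterating the short exact sequence
\[
0\to R/(A\cap B)\to R/A\oplus R/B\to R/(A+B)\to 0
\]
over the $s$ disjoint-support factors, together with depth additivity under tensor with polynomial rings, then yields $\depth R/J^{d-1}=s-1$.

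The crux, and the step I expect to demand the most care, is transferring this conclusion from $J$ back to $I$: Lemma~\ref{L4} only supplies $I\subseteq J$, and depth is not monotone under containment. My intended resolution is to strengthen the decomposition under the extra hypothesis $\mathfrak{m}\notin\Ass^{\infty}(I)$. A connected matroid has a complete linear relation graph on its ground set (the forward direction of Lemma~\ref{L1} generalized beyond graph matroids), so the hypothesis forces the underlying matroid $M$ of $I$ to split as a direct sum $M_1\oplus\ldots\oplus M_s$ along the $V_j$, giving $I=I(M_1)\cdots I(M_s)$ as an honest disjoint-support product of matroidal ideals of degrees $d_j\leq d-1$. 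Rerunning the short exact sequence computation above with the genuine matroidal factors $I(M_j)$ in place of the Veronese covers $I_j$, and applying Theorem~\ref{T1} to each connected $M_j$ (whose $\Gamma_{I(M_j)}$ is complete on $V_j$) to conclude $\depth R_j/I(M_j)^{d-1}=0$, delivers $\depth R/I^{d-1}=s-1$. Proposition~\ref{P1}(2) then yields $\dstab(I)\leq d-1$.
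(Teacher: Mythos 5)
Your proof is correct and follows essentially the same route as the paper: decompose $I$ as a product of matroidal ideals supported on the connected components of $\Gamma_I$, apply Theorem~\ref{T1} to each factor, and combine via the depth formula for products of ideals in disjoint sets of variables (the paper cites \cite[Lemma 2.2]{HT} where you rederive it from the Mayer--Vietoris short exact sequence). If anything, you are more explicit than the paper at the one delicate point --- justifying that $I$ actually \emph{equals} a disjoint-support product of the component ideals, rather than merely being contained in the Veronese intersection of Lemma~\ref{L4} --- which the paper passes over with ``we may consider that $I=\prod_{j}J_j$.''
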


\begin{proof}
Since $\frak{m}\notin\Ass^{\infty}(I)$, by Lemma \ref{L4} we have $I\subseteq I_1\cap\ldots\cap I_s$ such that $I_1,\ldots,I_s$ are Veronese type matroidal ideals with $\deg(I_i)=d_i$ for $1\leq i\leq s$ and $d=d_1+\ldots+d_s$ also generated by the set of product of vertices of the connected components $\Gamma_1,\ldots,\Gamma_s$ of $\Gamma_I$. It is clear that $I\subseteq \prod _{j=1}^sI_j$ and so by exchange property for each $u\in G(I)$ we have $u=\prod_{j=1}^su_j$ such that $u_j\in G(I_j)$. Therefore we may consider that $I=\prod_{j=1}^sJ_j$ such that $J_j\subseteq I_j$ for all $1\leq j\leq s$. Thus by using \cite[Lemma 2.2]{HT} and Theorem \ref{T1} we have $\depth R/I^l=s-1$ for some $l<d$. Therefore $\dstab(I)\leq d-1$, as required.
\end{proof}

The following main result immediately follows by Theorems \ref{T1}, \ref{T2}, Corollary \ref{C1} and \cite[Theorem 4.1]{HQ}.
\begin{Corollary}\label{C2}
Let $I$ be a matroidal ideal of degree $d$. Then $\astab(I),\dstab(I)\leq \min\{d,\ell(I)\}$.
\end{Corollary}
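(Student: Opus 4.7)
The plan is to simply synthesize the four cited ingredients, since all the real work has been carried out in the preceding theorems and the external result \cite[Theorem 4.1]{HQ}. The only subtlety is to check that both the case $\mathfrak{m}\in\Ass^{\infty}(I)$ and the case $\mathfrak{m}\notin\Ass^{\infty}(I)$ are covered for each of $\astab(I)$ and $\dstab(I)$, so that a single bound by $d$ (and hence by $\min\{d,\ell(I)\}$) holds unconditionally.

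First, I would handle the depth stability index. If $\mathfrak{m}\in\Ass^{\infty}(I)$, then Theorem~\ref{T1} directly yields $\dstab(I)\leq d$. If instead $\mathfrak{m}\notin\Ass^{\infty}(I)$, then Theorem~\ref{T2} gives the (strictly) stronger bound $\dstab(I)\leq d-1$. In either situation $\dstab(I)\leq d$. Next, for the associated primes stability index, Corollary~\ref{C1} already asserts $\astab(I)\leq d$ in full generality (with the sharper $d-1$ under the additional hypothesis $\mathfrak{m}\notin\Ass^{\infty}(I)$). So both stability indices are bounded by the degree $d$.

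For the analytic spread bound, I would invoke property~(5) of the introduction, namely \cite[Theorem 4.1]{HQ}, which states $\astab(I),\dstab(I)<\ell(I)$ for every polymatroidal ideal. Since a matroidal ideal is in particular polymatroidal, this yields $\astab(I),\dstab(I)\leq\ell(I)-1\leq\ell(I)$.

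Combining the two bounds gives $\astab(I),\dstab(I)\leq\min\{d,\ell(I)\}$, as required. The main (and essentially only) obstacle is the bookkeeping of the two cases according to whether $\mathfrak{m}\in\Ass^{\infty}(I)$ or not; once each case has been assigned its corresponding result from Theorem~\ref{T1}, Theorem~\ref{T2}, or Corollary~\ref{C1}, the corollary reduces to taking the minimum of two upper bounds.
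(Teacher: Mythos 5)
Your proposal is correct and matches the paper's own argument, which likewise derives the corollary immediately from Theorems~\ref{T1} and~\ref{T2}, Corollary~\ref{C1}, and \cite[Theorem 4.1]{HQ}. The case analysis on whether $\frak{m}\in\Ass^{\infty}(I)$ is exactly the bookkeeping the paper leaves implicit.
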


\begin{Corollary}\label{C3}
Let $I$ be a matroidal ideal of degree $4$ such that $\frak{m}\notin\Ass^{\infty}(I)$. Then $\astab(I)=\dstab(I)$.
\end{Corollary}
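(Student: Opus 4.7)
The plan is to exploit the disjoint-support factorization of $I$ supplied by Lemma~\ref{L4} and then invoke the already-known low-degree cases of the Herzog--Qureshi conjecture. Since $\frak{m}\notin\Ass^{\infty}(I)$, the argument in the proof of Theorem~\ref{T2} provides a factorization $I=J_1\cdots J_s$ with $s\ge 2$, where each $J_j$ is supported on the vertex set $V_j$ of the $j$-th connected component of $\Gamma_I$ and $d_1+\cdots+d_s=4$ with $d_j=\deg(J_j)$; in particular $d_j\le 3$. I would first verify, via a routine application of the exchange property of $I$ restricted to exchanges within a single component, that each $J_j$ is itself matroidal with $\gcd(J_j)=1$ and that $\Gamma_{J_j}$ is precisely the $j$-th component of $\Gamma_I$; in particular $\Gamma_{J_j}$ is connected.

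Next I would reduce both stability indices of $I$ to those of the factors. Because the supports $V_j$ are pairwise disjoint, $I^k=J_1^k\cdots J_s^k=J_1^k\cap\cdots\cap J_s^k$, and pasting together the primary decompositions of the $J_j^k$ in the rings $R_j=K[x_i:i\in V_j]$ gives an irredundant primary decomposition of $I^k$ in $R$. Together with the iterated form of \cite[Lemma~2.2]{HT} that already appears in the proof of Theorem~\ref{T2}, this should yield
\[
\Ass(R/I^k)=\bigcup_{j=1}^{s}\Ass(R_j/J_j^k),\qquad \depth(R/I^k)=\sum_{j=1}^{s}\depth(R_j/J_j^k)+(s-1).
\]
Since $\Gamma_{J_j}$ is connected, Proposition~\ref{P1} applied to $J_j$ gives $\lim_{k}\depth(R_j/J_j^k)=0$, so $\dstab(J_j)$ coincides with $\min\{k:\depth(R_j/J_j^k)=0\}$. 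Combining these observations with the persistence property of polymatroidal ideals, the two displays above will give $\astab(I)=\max_{j}\astab(J_j)$ and $\dstab(I)=\max_{j}\dstab(J_j)$.

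To finish, I would check that $\astab(J_j)=\dstab(J_j)$ for each $j$, using $d_j\le 3$. For $d_j=1$, $J_j$ is the monomial prime generated by the variables of $V_j$ and both indices equal $1$ trivially. For $d_j=2$, equality follows from \cite[Theorem~2.12]{KM}, and for $d_j=3$ from \cite[Corollary~3.13]{MN}. Taking maxima over $j$ then yields $\astab(I)=\dstab(I)$.

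The hard part will be rigorously establishing the two splitting formulas above; once they are in hand, the reduction to the low-degree results is essentially bookkeeping. Both splittings rely on the disjointness of the $V_j$, which is the only place where the hypothesis $\frak{m}\notin\Ass^{\infty}(I)$ is used, and the hypothesis $d=4$ enters only through the bound $d_j\le 3$, placing each factor $J_j$ into a case of the Herzog--Qureshi conjecture that is already settled in the literature.
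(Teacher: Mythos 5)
Your proposal is correct and follows essentially the same route as the paper: factor $I$ into matroidal ideals supported on the connected components of $\Gamma_I$ (each of degree at most $3$), show $\astab$ and $\dstab$ are the maxima of the corresponding indices of the factors via the disjoint-support splitting of $\Ass$ and the Hoa--Tam depth formula, and then invoke \cite[Theorem 2.12]{KM} and \cite[Corollary 3.13]{MN}. Your write-up is in fact slightly more careful than the paper's, which tacitly groups the components into just two factors and omits the trivial degree-one case.
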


\begin{proof}
By using the proof of Theorem \ref{T2} we may consider $I=J_1J_2$ such that $J_1, J_2$ are disjoint matroidal ideal such that $\deg(I_1)+\deg(I_2)=4$.
Therefore for all positive integer $l$, we readily conculde that  $\Ass(I^l)=\Ass(J_1^l)\cup\Ass(J_2^l)$. This yields that
${\astab(I)=\max\{\astab(J_1),\astab(J_2)\}}$. By \cite[Theorem 2.12]{KM} and \cite[Corollary 3.13]{MN}, we can conclude that $\astab(J_1)=\dstab(J_1), \astab(J_2)=\dstab(J_2)$ and also by \cite[Lemma 2.2]{HT} we have $\dstab(I)=\max\{\dstab(J_1),\dstab(J_2)\}$. It therefore follows $\astab(I)=\dstab(I)$, as required.
\end{proof}

By the following example, Karimi and Mafi \cite[Example 2.21]{KM} show that the Conjecture \ref{Co} for all polymatroidal ideals does not hold.
\begin{Example}
Let $n=4$ and \[
I=(x_1x_2x_3,x_2^2x_3,x_2x_3^2,x_1x_2x_4,x_2^2x_4,x_2x_4^2,x_1x_3x_4,x_3^2x_4,x_3x_4^2,x_2x_3x_4)
.\] Then $I$ is a polymatroidal ideal such that $\dstab(I)=1$ and $\astab(I)=2$.
\end{Example}

The following example of matroidal ideal gives a negative answer to the Conjecture \ref{Co}.
\begin{Example}
Let $n=8$ and $I=(x_1x_2x_3x_4,x_1x_2x_3x_5,x_1x_2x_3x_6,x_1x_2x_3x_8,x_1x_2x_4x_7,\\x_1x_2x_5x_7,
x_1x_2x_6x_7,x_1x_2x_7x_8,x_1x_3x_4x_7,x_1x_3x_4x_8,x_1x_3x_5x_7,x_1x_3x_5x_8,x_1x_3x_6x_7,x_1x_3x_6x_8,\\x_1x_3x_7x_8,
x_1x_4x_7x_8,x_1x_5x_7x_8,x_1x_6x_7x_8,x_2x_3x_4x_5,x_2x_3x_4x_6,x_2x_3x_4x_7,x_2x_3x_5x_6,x_2x_3x_5x_7,\\x_2x_3x_5x_8,x_2x_3x_6x_7,x_2x_3x_6x_8,x_2x_3x_7x_8,x_2x_4x_5x_7,
x_2x_4x_6x_7,x_2x_5x_6x_7,x_2x_5x_7x_8,x_2x_6x_7x_8,\\x_3x_4x_5x_7,x_3x_4x_5x_8,x_3x_4x_6x_7,x_3x_4x_6x_8,x_3x_4x_7x_8,x_3x_5x_6x_7,x_3x_5x_6x_8,x_3x_5x_7x_8,
x_3x_6x_7x_8,\\x_4x_5x_7x_8,x_4x_6x_7x_8,x_5x_6x_7x_8)$.\\ Then $I$ is a matroidal ideal such that $\astab(I)=3$ and $\dstab(I)=2$.
\end{Example}

\begin{proof}
It is clear that $I$ is a matroidal ideal.
Since
$I^2:(x_2x_3x_7x_8)x_1x_5x_6=\frak{m}$, by Proposition \ref{P1} we conclude that $\dstab(I)=2$.
Also, by Corollary \ref{C1} we have $\astab(I)\leq 4$. Since $\Ass(I^2)\neq\Ass(I^3)=\Ass(I^4)$, it follows that $\astab(I)=3$.
\end{proof}



\end{document}